\newcommand{\SL} {\mathrm{SL}}
\newcommand{\SO} {\mathrm{SO}}
\newcommand{\Sp} {\mathrm{Sp}}
\renewcommand{\sl} {\mathrm{sl}}
\newcommand{\so}   {\mathrm{so}}
\newcommand{\oo}  {\mathrm{o}}
\renewcommand{\sp}   {\mathrm{sp}}
\newcommand{\Int} {\mathrm{Int}}
\newcommand{\im} {\mathrm{im}}
\newcommand{\Hom} {\mathrm{Hom}}
\newcommand{\Lie} {\mathrm{Lie}}
\def\CC {{\mathbb C}}     
\def \OO {{\mathcal O}}
\def \F {{\mathcal F}}
\def \g {\mathfrak{g}}
\newtheorem{theorem}{Theorem}[section]
\newtheorem{lemma}[theorem]{Lemma}
\newtheorem{prop}[theorem]{Proposition}
\newtheorem{coro}[theorem]{Corollary}
\begin{document}

\title{Decomposition of cohomology of vector bundles on homogeneous ind-spaces}

\author{Elitza Hristova\thanks{The first author has been partially supported by the Young Researchers Program of the Bulgarian Academy of Sciences, Project Number DFNP-86/04.05.2016 and by the Bulgarian National Science Fund under Grant I02/18.}, Ivan Penkov \thanks{The second author has been supported in part by DFG Grant PE 980/6-1.}}

\date{}

\maketitle

\begin{abstract} Let $G$ be a locally semisimple ind-group, $P$ be a parabolic subgroup, and $E$ be a finite-dimensional $P$-module. We show that, under a certain condition on $E$, the nonzero cohomologies of the homogeneous vector bundle $\OO_{G/P}(E^*)$ on $G/P$ induced by the dual $P$-module $E^*$ decompose as direct sums of cohomologies of bundles of the form $\OO_{G/P}(R)$ for (some) simple constituents $R$ of $E^*$. In the finite-dimensional case, this result is a consequence of the Bott-Borel-Weil theorem and Weyl's semisimplicity theorem. In the infinite-dimensional setting we consider, there is no relevant semisimplicity theorem. Instead, our results are based on the injectivity of the cohomologies of the bundles $\OO_{G/P}(R)$.
\end{abstract}

\section{Introduction}
The Bott-Borel-Weil theorem is a basic result which connects algebraic geometry with representation theory. More precisely, if $G$ is a connected complex reductive linear algebraic group and $P$ is a parabolic subgroup, the Bott-Borel-Weil theorem states that a homogeneous bundle on $G/P$, induced by a simple $P$-module, has at most one nonzero cohomology group. Moreover, this group is a simple highest weight $G$-module. Using Weyl's semisimplicity theorem, one can further show that, for any finite-dimensional $P$-module, the corresponding nonzero cohomologies are semisimple $G$-modules. More specifically, these cohomologies split as direct sum of cohomologies of homogeneous bundles induced by (some) simple constituents of the inducing $P$-module.

Analogues of the Bott-Borel-Weil theorem have been proved in various contexts. In particular, in \cite{DPW} and \cite{DP}, the case of locally reductive ind-groups $G$ has been studied. For a locally reductive ind-group $G$, a homogeneous bundle on $G/P$ induced by a simple finite-dimensional $P$-module still has at most one nonzero cohomology group. However, the difference with the finite-dimensional case is that this cohomology group is dual to a simple $G$-module, and hence is not irreducible being uncountable dimensional. In addition, the cohomology group may or may not be an integrable $\g = \Lie(G)$-module.

In this article we consider locally semisimple ind-groups $G$ and their homogeneous ind-spaces $G/P$ for parabolic ind-groups $P \hookrightarrow G$. To any finite-dimensional $P$-module $E$ we attach the homogeneous vector bundle $\OO_{G/P}(E^*)$ induced by the dual $P$-module $E^*$. We impose the condition that all non-zero cohomology groups $H^q(G/P,\OO_{G/P}(E^*))$ 
are integrable $\g$-modules.
Then, our main result (Theorem \ref{thm_DirectSum}) is that, despite the absence of a relevant analogue of Weyl's semisimplicity theorem, any such nonzero cohomology group $H^q(G/P,\OO_{G/P}(E^*))$ 
is isomorphic to a direct sum of cohomologies of homogeneous bundles on $G/P$ induced by (some) simple constituents of $E^*$. The proof uses the fact, that any nonzero cohomology group of a homogeneous bundle on $G/P$ induced by a simple constituent of $E^*$ 
is injective in the category of integrable $\g$-modules.

\section{Preliminaries} \label{sec_Prelim}
In this section we summarize the definitions and properties from the field of ind-groups, which we need throughout the article. More detailed expositions on the subject can be found in \cite{DPW}, \cite{DP}.

A \textit{locally algebraic ind-group} over $\CC$, which we briefly refer to as an \textit{ind-group}, is a set $G$ which is the inductive limit of embeddings of connected algebraic groups, i.e. $G = \varinjlim G_n$, where
\begin{align} \label{eq_filtration}
G_1 \subset G_2 \subset \cdots G_n \subset \cdots
\end{align}
An ind-group $G$ is \textit{locally (semi)simple} 
if the filtration (\ref{eq_filtration}) can be chosen so that each $G_n$ is a (semi)simple 
linear algebraic group. A first important class of locally simple ind-groups is that of diagonal locally simple ind-groups. For its definition we need to recall the notion of a diagonal embedding. An embedding of classical simple finite-dimensional groups $G' \subset G$ is called \textit{diagonal} if the induced injection of the Lie algebras $\g' \subset \g$ has the following property: the natural representation of $\g$ decomposes over $\g'$ as a direct sum of copies of the natural representation of $\g'$, of its dual, and of the trivial $\g'$-representation. A locally simple ind-group $G$ is called \textit{diagonal} if the filtration (\ref{eq_filtration}) can be chosen so that each embedding $G_n \subset G_{n+1}$ is a diagonal embedding of classical simple groups. 

First examples of diagonal ind-groups are the \textit{finitary simple ind-groups} $\SL(\infty) = \varinjlim \SL(n)$, $\SO(\infty) = \varinjlim \SO(n)$, and $\Sp(\infty) = \varinjlim \Sp(2n)$, where the inclusions $\SL(n) \subset \SL(n+1)$ and $\SO(n)\subset \SO(n+1)$ are given by $A \mapsto \left (\begin{smallmatrix} A & 0\\ 0 & 1\end{smallmatrix}  \right ) $ and the inclusion $\Sp(2n)\subset \Sp(2n+2)$ is given by $A \mapsto \left (\begin{smallmatrix} A & 0 & 0\\ 0 & 1 & 0 \\ 0 & 0 & 1\end{smallmatrix}\right )$.

Let $G = \varinjlim G_n$ be an ind-group. A \textit{Cartan (resp., Borel) subgroup} of $G$ is an ind-subgroup $H$ (resp., $B$) of $G$ such that for a well-chosen filtration (\ref{eq_filtration}), the group $H_n = G_n \cap H$ (resp., $B_n = G_n \cap B$) is a Cartan (resp., Borel) subgroup of $G_n$ for each $n$. A \textit{parabolic subgroup} $P$ of $G$ is an ind-subgroup $P$ which contains a Borel subgroup $B$. A \textit{$G$-module} is a vector space $V$ equipped with compatible structures of $G_n$-modules for all $n$.

An ind-group is an example of the more general notion of an ind-variety (see \cite{Ku}, \cite{DPW}). Briefly, an \textit{ind-variety} $X = \varinjlim X_n$ is determined by a sequence of closed embeddings of algebraic varieties
$$
X_1 \subset X_2 \subset \cdots \subset X_n \subset \cdots
$$
An ind-variety $X$ is automatically a topological space, and one defines the structure sheaf $\OO_X$ of $X$ as the projective limit sheaf of the structure sheaves $\OO_{X_n}$ of $X_n$. More generally, a sheaf $\F$ on $X$ is the limit of a projective system of sheaves $\F_n$ on $X_n$.

Let $G= \varinjlim G_n$ be a locally semisimple ind-group and $P = \varinjlim P_n$ be a parabolic subgroup. Let $E$ be a finite-dimensional $P$-module. 
The inductive limit $G/P = \varinjlim G_n /P_n$ has a natural structure of an ind-variety. The sheaf $\OO_{G/P}(E^*)$ is defined as the projective limit $\OO_{G/P}(E^*) = \varprojlim \OO_{G_n/P_n}(E_n^*)$, where $E_n = E_{\vert P_n}$ and $\OO_{G_n/P_n}(E_n^*)$ is the sheaf of regular local sections of the homogeneous vector bundle over $G_n/P_n$ induced by the module $E_n^*$. Using the Mittag-Leffler principle, it is shown in \cite{DPW} that the cohomology group $H^q(G/P, \OO_{G/P}(E^*))$ is canonically isomorphic to the projective limit $\varprojlim H^{q}(G_n/P_n, \OO_{G_n/P_n}(E_n^*))$ for each $q$.

By definition (see \cite{DPW}), the Lie algebra of an ind-group $G = \varinjlim G_n$ is the inductive limit Lie algebra $\g = \varinjlim \g_n$ for the filtration
$$
\g_1 \subset \g_2 \subset \cdots \subset \g_n \subset \cdots,
$$
where $\g_n$ is the Lie algebra of $G_n$ and the inclusions $\g_n \subset \g_{n+1}$ are the differentials of the group embeddings $G_n \subset G_{n+1}$. A Lie algebra $\g$ which is isomorphic to a direct limit of finite-dimensional Lie algebras is called \textit{locally finite}. Therefore, the Lie algebra of an ind-group is locally finite. A locally finite Lie algebra $\g$ which is isomorphic to a direct limit of (semi)simple 
finite-dimensional Lie algebras, is called \textit{locally (semi)simple}
. First examples of locally simple Lie algebras are the \textit{simple finitary} Lie algebras $\sl(\infty)$, $\so(\infty)$, and $\sp(\infty)$, which are the Lie algebras of the ind-groups $\SL(\infty)$, $\SO(\infty)$, and $\Sp(\infty)$. 

A module $M$ over $\g$ is called \textit{integrable} if $\dim \mathrm{span} \{m, g\cdot m, g^2 \cdot m, \cdots \} < \infty$ for any $m \in M$ and $g \in \g$. Below we use extensively the properties of integrable $\g$-modules established in \cite{PS}. By $\Int_{\g}$ we denote the category of integrable $\g$-modules.

\section{On integrable $\g$-modules} \label{sec_Alg}

In this section $\g = \varinjlim \g_n$ denotes a locally semisimple Lie algebra.

\begin{lemma} \label{lemma_finLen} Let $M = \varinjlim M_n$ be an integrable $\g$-module obtained as the inductive limit of finite-dimensional $\g_n$-modules $M_n$. If the length of $M_n$ is bounded by a number $N$ not depending on $n$, the length of the $\g$-module $M$ is also bounded by $N$.
\end{lemma}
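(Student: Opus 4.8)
The plan is a contradiction argument resting on the elementary observation that any violation of a length bound is witnessed by finitely many elements, all of which already live at a finite stage $M_n$.

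First I would normalize the setup. Replacing each $M_n$ by its image in $M$, we may assume that the connecting maps $M_n \to M_{n+1}$ are injective, so that $M = \bigcup_n M_n$ with each $M_n$ a finite-dimensional $\g_n$-stable subspace of $M$; passing to an image can only decrease composition length, so each $M_n$ still has length at most $N$. Now suppose, for contradiction, that the length of the $\g$-module $M$ exceeds $N$. Then there is a strictly increasing chain of $\g$-submodules
\[
V_0 \subsetneq V_1 \subsetneq \cdots \subsetneq V_{N+1} \subseteq M .
\]
Choose witnesses $m_i \in V_i \setminus V_{i-1}$ for $i = 1, \dots, N+1$. Since $M = \bigcup_n M_n$ and the filtration is a chain, there is a single index $n$ with $m_1, \dots, m_{N+1} \in M_n$.

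The next step is to intersect the chain with $M_n$. Put $W_i := V_i \cap M_n$. Each $V_i$ is $\g$-stable, hence $\g_n$-stable, and $M_n$ is $\g_n$-stable, so each $W_i$ is a finite-dimensional $\g_n$-submodule of $M_n$. Moreover $m_i \in V_i \cap M_n = W_i$ while $m_i \notin V_{i-1} \supseteq W_{i-1}$, so $W_{i-1} \subsetneq W_i$; thus
\[
W_0 \subsetneq W_1 \subsetneq \cdots \subsetneq W_{N+1}
\]
is a strictly increasing chain of $\g_n$-submodules of $M_n$, forcing the length of $M_n$ to be at least $N+1$ and contradicting the hypothesis. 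Hence the length of $M$ is at most $N$.

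I do not anticipate a genuine obstacle here: the only point requiring a little care is checking that each $W_i = V_i \cap M_n$ is honestly $\g_n$-stable and that strictness of the chain is preserved under intersection, which is exactly why the witnesses $m_i$ are chosen first and the stage $n$ is chosen afterwards, large enough to capture all of them. Note also that integrability of $M$ plays no role beyond what is already built into the hypotheses (that the $M_n$ are finite-dimensional); the entire force of the argument is the ``compactness'' fact that a chain of length $N+1$ is visible in finitely much data.
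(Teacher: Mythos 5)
Your proposal is correct and is essentially the paper's own argument: choose witnesses for a strictly increasing chain of $N+1$ proper inclusions, pass to a single finite stage $M_n$ containing all of them, and intersect the chain with $M_n$ to contradict the length bound there. The only (harmless) difference is that you make explicit the reduction to injective connecting maps, which the paper leaves implicit.
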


\begin{proof}
Assume, on the contrary, that $M$ possesses a chain of submodules of the form
$$
0 \subsetneq U^1 \subsetneq U^2 \subsetneq \dots \subsetneq U^{N} \subsetneq U^{N+1} = M.
$$ 
We choose elements $x_1 \in U^1 \setminus \{0\}$, $x_2 \in U^2 \setminus U^1$, $\dots$$, x_{N+1} \in U^{N+1} \setminus U^N$, and choose $n$ large enough so that $x_1, x_2, \dots, x_{N+1} \in M_n$. Then we consider the filtration of $M_n$
$$
0 \subset (U^1 \cap M_n) \subset (U^2\cap M_n) \subset \dots \subset (U^{N}\cap M_n) \subset M_n
$$
and note that $U^i \cap M_n \neq U^{i+1}\cap M_n$ for $i = 1, \dots, N$.
This contradicts the assumption that the length of $M_n$ is bounded by $N$. 
\end{proof}

We recall the following results from \cite{PS} which are very useful for our considerations.
\begin{lemma} \label{lemma_PS} \cite[Lemma 4.1, Proposition 3.2]{PS} Let $M$ be an integrable $\g$-module.
\begin{itemize}
\item[(a)]The $\g$-module $M^* = \Hom_{\CC}(M, \CC)$ is an integrable $\g$-module if and only if for any $n$, $M$ considered as a $\g_n$-module has finitely many isotypic $\g_n$-components.
\item[(b)] If the $\g$-module $M^*$ is integrable, then $M^*$ is an injective object in the category $\Int_{\g}$.
\end{itemize}
\end{lemma}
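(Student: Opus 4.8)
The plan is to treat (a) and (b) by separate arguments; note that (b) does not logically use (a) beyond the fact, built into its statement, that $M^{*}$ is already an object of $\Int_{\g}$. For (a), recall that a $\g$-module is integrable precisely when its restriction to every $\g_{n}$ is integrable, and that an integrable module over the finite-dimensional semisimple Lie algebra $\g_{n}$ is exactly a direct sum of finite-dimensional simple $\g_{n}$-modules, equivalently one in which every vector generates a finite-dimensional submodule (Weyl). If $M$ has only finitely many isotypic $\g_{n}$-components, write $M|_{\g_{n}}=\bigoplus_{i=1}^{k_{n}}V_{\lambda_{i}}\otimes W_{i}$ with the $W_{i}$ trivial multiplicity spaces; then $M^{*}|_{\g_{n}}=\bigoplus_{i=1}^{k_{n}}V_{\lambda_{i}}^{*}\otimes W_{i}^{*}$ is again a finite direct sum, each summand a direct sum of copies of the finite-dimensional simple module $V_{\lambda_{i}}^{*}$, hence integrable over $\g_{n}$; as this holds for all $n$, $M^{*}$ is integrable. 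Conversely, if for some $n$ there occur in $M|_{\g_{n}}$ infinitely many pairwise non-isomorphic simple modules $V_{\mu_{1}},V_{\mu_{2}},\dots$, fix one copy of each inside $M$ together with a vector-space complement and let $\phi\in M^{*}$ restrict to a non-zero functional on each $V_{\mu_{j}}$ and vanish on the complement. Each $\g_{n}$-equivariant restriction $M^{*}\twoheadrightarrow V_{\mu_{j}}^{*}$ carries $U(\g_{n})\phi$ onto the simple module $V_{\mu_{j}}^{*}$; since infinitely many distinct dominant weights force $\dim V_{\mu_{j}}\to\infty$, the submodule $U(\g_{n})\phi$ is infinite-dimensional, so $M^{*}$ is not integrable. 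This proves (a).

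For (b) we have $M^{*}\in\Int_{\g}$ by hypothesis, and since $\Int_{\g}$ is a Grothendieck category it suffices to show $\mathrm{Ext}^{1}_{\Int_{\g}}(N,M^{*})=0$ for every $N\in\Int_{\g}$. The first point is that $\Int_{\g}$ is closed under extensions inside the category of all $\g$-modules: given $0\to A\to X\to B\to 0$ with $A,B$ integrable and $\xi\in\g$, one writes $\xi^{d}v$ as a combination of $v,\xi v,\dots,\xi^{d-1}v$ modulo some $a\in A$, and then the $\xi$-orbit of $v$ lies in $\mathrm{span}\{v,\dots,\xi^{d-1}v\}$ plus the finite-dimensional $\xi$-orbit of $a$. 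Hence $\mathrm{Ext}^{1}_{\Int_{\g}}(N,M^{*})=\mathrm{Ext}^{1}_{U(\g)}(N,M^{*})=H^{1}\bigl(\g,\Hom_{\CC}(N,M^{*})\bigr)=H^{1}\bigl(\g,(N\otimes M)^{*}\bigr)$, with $N\otimes M$ integrable. So everything reduces to the claim that $H^{1}(\g,V^{*})=0$ for every integrable $\g$-module $V$.

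To prove this, restrict to each $\g_{n}$: $V|_{\g_{n}}$ is integrable, hence semisimple, so $V^{*}|_{\g_{n}}=(V|_{\g_{n}})^{*}$ is, up to multiplicities, a direct product of finite-dimensional simple $\g_{n}$-modules; since $H^{1}(\g_{n},-)$ vanishes on finite-dimensional modules (Whitehead's first lemma) and commutes with arbitrary direct sums and products, $H^{1}(\g_{n},V^{*})=0$ for all $n$. Because $\g=\bigcup_{n}\g_{n}$, the Chevalley--Eilenberg complex of $\g$ with coefficients in $V^{*}$ is the inverse limit of those of the $\g_{n}$ along degreewise surjective maps, so the $\varprojlim^{1}$-exact sequence gives $H^{1}(\g,V^{*})\cong\varprojlim^{1}_{n}H^{0}(\g_{n},V^{*})=\varprojlim^{1}_{n}\mathrm{Ann}_{V^{*}}(\g_{n}V)$. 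Setting $U_{n}=\g_{n}V$, $U=\g V=\bigcup_{n}U_{n}$, and fixing a vector-space decomposition $V=U\oplus C$, one obtains a short exact sequence of inverse systems $0\to\{(U/U_{n})^{*}\}_{n}\to\{\mathrm{Ann}_{V^{*}}(U_{n})\}_{n}\to\{C^{*}\}_{n}\to 0$ in which the right-hand system is constant with identity transition maps. Its $\varprojlim^{1}$ vanishes, and $\varprojlim^{1}_{n}(U/U_{n})^{*}=0$ because every $u\in U$ lies in $U_{n}$ for all large $n$, so that a preimage under the $\varprojlim^{1}$-differential can be built coordinatewise by finite sums; hence $\varprojlim^{1}_{n}\mathrm{Ann}_{V^{*}}(U_{n})=0$ and $H^{1}(\g,V^{*})=0$.

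The one delicate point is the vanishing of $\varprojlim^{1}_{n}\mathrm{Ann}_{V^{*}}(\g_{n}V)$: the naive attempt to produce a preimage by a telescoping sum running from level $n$ to infinity breaks down exactly on the part of $V$ not contained in $\g V$, which is why one first splits off the vector-space complement $C$ — on $C^{*}$ the inverse system is constant so the obstruction is trivially absent, while on $\g V$ all the telescoping sums are finite. Everything else is bookkeeping with standard ingredients: semisimplicity of $V|_{\g_{n}}$, Whitehead's first lemma, the identification $\mathrm{Ext}^{1}_{U(\g)}(-,-)\cong H^{1}\bigl(\g,\Hom_{\CC}(-,-)\bigr)$, and the behaviour of Lie algebra cohomology under inverse limits.
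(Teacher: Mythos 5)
The paper gives no proof of this lemma at all: it is quoted verbatim from \cite{PS} (Lemma 4.1 and Proposition 3.2 there), so there is no in-paper argument to compare against. Your proof is, as far as I can check, correct and self-contained, and for part (b) it takes a genuinely different route from the one in \cite{PS}. There, injectivity is obtained from the identification $\Hom_{\g}(N,M^{*})\cong\Hom_{\CC}((N\otimes M)_{\g},\CC)$ and exactness properties of (co)invariants on integrable modules; you instead reduce to $\Ext^{1}_{\mathrm{U}(\g)}(N,M^{*})=H^{1}(\g,(N\otimes M)^{*})$ (using closure of $\Int_{\g}$ under extensions, which you verify correctly), kill $H^{1}(\g_{n},V^{*})$ by Whitehead's lemma applied to the product of finite-dimensional duals, and then compute the remaining Milnor obstruction $\varprojlim^{1}(V^{*})^{\g_{n}}=\varprojlim^{1}\mathrm{Ann}_{V^{*}}(\g_{n}V)$. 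That last computation is the delicate point and it is right: the Chevalley--Eilenberg transition maps are degreewise surjective so the $\varprojlim^{1}$-sequence applies, and after splitting off a complement of $\g V$ the telescoping sums $\sum_{k\ge n}b_{k}$ are finite on every vector of $\g V=\bigcup_{n}\g_{n}V$. What your approach buys is an explicit, purely cohomological proof that makes visible exactly where integrability of $V$ enters (finiteness of the telescoping sums); what the \cite{PS} approach buys is brevity.

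Three small points, none of them gaps. In the converse of (a), the complement you split off should be a $\g_{n}$-module complement (available by semisimplicity of $M|_{\g_{n}}$) so that the projections onto the chosen copies of $V_{\mu_{j}}$ are equivariant; and your conclusion that $M^{*}$ is then not integrable uses the equivalence, for finite-dimensional semisimple $\g_{n}$, between elementwise local finiteness and finite-dimensionality of cyclic submodules --- you state this at the outset, so it is fine to invoke, but it is a nontrivial standard fact, not just Weyl's theorem. Finally, the appeal to $\Int_{\g}$ being a Grothendieck category is unnecessary: vanishing of Yoneda $\Ext^{1}(N,M^{*})$ for all $N$ gives injectivity in any abelian category by a pushout argument, and the pushout stays in $\Int_{\g}$ because the category is closed under quotients.
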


\begin{coro} \label{prop_Decomp} Let $M \in \Int_{\g}$ be a module of finite length $N$ with simple constituents $M_i$, $i = 1, \dots N$, and such that $M^* \in \Int_{\g}$. Then there is an isomorphism of $\g$-modules 
$$M^* \cong \bigoplus_i M^*_i.$$
\end{coro}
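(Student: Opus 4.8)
The plan is to induct on the length $N$ of $M$. For $N=1$ the module $M$ is simple and the statement is trivial. For the inductive step, choose a simple submodule $M_1 \subseteq M$ (which exists since $M$ has finite length), giving a short exact sequence of integrable $\g$-modules
$$
0 \longrightarrow M_1 \longrightarrow M \longrightarrow M/M_1 \longrightarrow 0 .
$$
Dualizing over $\CC$ yields an exact sequence of vector spaces
$$
0 \longrightarrow (M/M_1)^* \longrightarrow M^* \longrightarrow M_1^* \longrightarrow 0 .
$$
The first observation is that $(M/M_1)^*$ and $M_1^*$ are again integrable: by Lemma \ref{lemma_PS}(a), $M^* \in \Int_\g$ means that for every $n$ the $\g_n$-module $M$ has finitely many isotypic components; since $M_1$ and $M/M_1$ are $\g_n$-subquotients of $M$, they too have finitely many isotypic $\g_n$-components, so $M_1^*, (M/M_1)^* \in \Int_\g$. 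In particular the displayed sequence is a short exact sequence in the category $\Int_\g$.

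Next I would invoke injectivity. By Lemma \ref{lemma_PS}(b), since $M_1^* \in \Int_\g$, the module $M_1^*$ is injective in $\Int_\g$; hence the surjection $M^* \to M_1^*$ splits, giving
$$
M^* \cong M_1^* \oplus (M/M_1)^* .
$$
Now $M/M_1 \in \Int_\g$ has length $N-1$, its constituents are $M_2,\dots,M_N$, and $(M/M_1)^* \in \Int_\g$, so the inductive hypothesis applies and gives $(M/M_1)^* \cong \bigoplus_{i=2}^N M_i^*$. Combining the two isomorphisms yields $M^* \cong \bigoplus_{i=1}^N M_i^*$, completing the induction.

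There is essentially no serious obstacle here; the one point that requires a moment's care is the bookkeeping on isotypic components in the first paragraph, i.e. checking that the integrability hypothesis $M^* \in \Int_\g$ descends to subquotients via the criterion in Lemma \ref{lemma_PS}(a) — this is needed both to place the dual sequence in $\Int_\g$ and to feed the inductive hypothesis. Everything else is a formal consequence of injectivity of duals of integrable modules together with finite length.
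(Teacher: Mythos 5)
Your proof is correct and is essentially the paper's own argument spelled out: induction on the length, with the splitting coming from the injectivity of $M_1^*$ in $\Int_{\g}$ via Lemma \ref{lemma_PS}(b). The only cosmetic difference is that you verify integrability of $M_1^*$ and $(M/M_1)^*$ through the isotypic-component criterion of Lemma \ref{lemma_PS}(a), whereas the paper gets the same fact more directly from the closure of $\Int_{\g}$ under submodules and quotients applied to $M^*$ itself.
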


\begin{proof} The case $N=1$ is clear and the general case follows from a simple induction argument using Lemma \ref{lemma_PS} (b) and the fact that $\Int_{\g}$ is closed with respect to taking submodules and quotients.
\end{proof}

In the setting of Section \ref{sec_Prelim}, let $B_n$ be a Borel subgroup of $G_n$. A weight $\mu$ of $B = \varinjlim B_n$ is the (projective) limit of a projective system of integral $G_n$-weights $\{\mu_n\}$. By definition, $\mu$ is $B$-\textit{dominant} if all $\mu_n$ are $B_n$-dominant. Let $\mu$ be a $B$-dominant weight. By $V_B(\CC_{\mu})$ we denote the inductive limit of finite-dimensional simple $G_n$-modules $V_{B_n}(\CC_{\mu_n})$ with $B_n$-highest weight $\mu_n$, where the highest weight space $\CC_{\mu_n}$ of $V_{B_n}(\CC_{\mu_n})$ is mapped to $\CC_{\mu_{n+1}}$. Assuming that $G_n$ is classical simple, following Bourbaki \cite{B}, we write the weights of $G_n$ as linear combinations of standard functions $\varepsilon_n^1, \dots, \varepsilon_n^{r_n +1}$ if $G_n$ is of type $A$, and $\varepsilon_n^1, \dots, \varepsilon_n^{r_n}$ otherwise, where $r_n = \mathrm{rk} \g_n$.

\begin{prop} \label{prop_FinitaryCaseIntegrable} Let $G$ be one of the finitary simple ind-groups and let $\mu = \varprojlim \mu_n$ be a dominant weight of $G$.
\begin{itemize}
\item[(i)] If $G \cong \SL(\infty)$, then $V_B(\CC_{\mu})^*$ is an integrable $\g$-module if and only if there exists an integer $m_0$ such that for any $n$ if $\mu_n = \sum_i a_n^i \varepsilon_n^i$ then $a_n^{1} - a_n^{r_n +1} \leq m_0$.
\item[(ii)] If $G \cong \SO(\infty)$ or $\Sp(\infty)$, then $V_B(\CC_{\mu})^*$ is an integrable $\g$-module if and only if there exists an integer $m_0$ such that for any $n$ if $\mu_n = \sum_i a_n^i \varepsilon_n^i$ then $a_n^{1} \leq m_0$.
\end{itemize}
\end{prop}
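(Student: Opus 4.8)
The plan is to reduce the statement, via Lemma~\ref{lemma_PS}(a), to a concrete combinatorial condition on the $\g_n$-isotypic decomposition of $V_B(\CC_\mu)$, so the proof amounts to controlling how a fixed simple module $V_{B_n}(\CC_{\mu_n})$ branches to the preceding $\g_m$ along the finitary chain. First I would recall that, by Lemma~\ref{lemma_PS}(a), $V_B(\CC_\mu)^*$ is integrable precisely when, for every $n$, $V_B(\CC_\mu)$ has finitely many isotypic $\g_n$-components; since $V_B(\CC_\mu) = \varinjlim V_{B_m}(\CC_{\mu_m})$, this is equivalent to: for each fixed $n$, the set of isomorphism classes of simple $\g_n$-constituents appearing in $V_{B_m}(\CC_{\mu_m})$ (for $m \geq n$) is finite, and moreover each such class appears with bounded multiplicity — although, because $V_B(\CC_\mu)$ is a limit of finite-dimensional modules with highest weight stabilizing on the $\varepsilon^i$ with small index, it will in fact suffice to bound the \emph{set} of highest weights of $\g_n$-constituents, the multiplicities being automatically controlled.

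Next I would make the branching explicit. For $\SL(\infty)$, the embedding $\SL(m) \subset \SL(m+1)$ is the standard one, and the branching of $V_{\SL(m+1)}(\lambda)$ to $\SL(m)$ is governed by interlacing: a $\g_m$-constituent of $V_{\SL(m+1)}(\mu_{m+1})$, written in the $\varepsilon$-coordinates normalized so that $\sum a^i = 0$, has its coordinates interlacing those of $\mu_{m+1}$. Iterating down from level $m$ to level $n$, the coordinates of any $\g_n$-constituent stay squeezed between the largest and smallest coordinates of $\mu_m$; after normalizing out the $\SL$-traces, the relevant invariant bounding the ``spread'' of $\mu_m$ is exactly $a_m^1 - a_m^{r_m+1}$. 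Thus if $a_m^1 - a_m^{r_m+1} \leq m_0$ for all $m$, the coordinates of every $\g_n$-constituent lie in a fixed finite window, giving finitely many isotypic components; conversely, if $a_m^1 - a_m^{r_m+1} \to \infty$, one produces, for a fixed $n$, $\g_n$-constituents with arbitrarily large spread (via explicit interlacing sequences, e.g. keeping one coordinate near $a_m^1$ and one near $a_m^{r_m+1}$ and collapsing the rest), hence infinitely many isotypic $\g_n$-components, so $V_B(\CC_\mu)^*$ is not integrable. For $\SO(\infty)$ and $\Sp(\infty)$ the argument is the same in spirit: the branching rules $\SO(m)\downarrow\SO(m-1)$ and $\Sp(2m)\downarrow\Sp(2m-2)$ are again interlacing-type, but now the weights are symmetric (no trace to factor out), so the single relevant bound controlling the window is $a_m^1$; one checks both directions as before.

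The main obstacle will be the converse (non-integrability) direction: producing, whenever the relevant coordinate is unbounded, genuinely infinitely many distinct $\g_n$-isotypic classes inside $V_B(\CC_\mu)$ and making sure this is not an artifact of finitely many classes occurring with unbounded multiplicity. The cleanest way around this is to note that for $\SL(\infty)$ (and similarly in the orthogonal/symplectic cases) the highest weight coordinates $a_m^i$ with $i \leq r_n$ are eventually constant in $m$ for each fixed $n$, so the ``large'' behavior is forced into coordinates of large index, and the interlacing/branching then transfers genuine growth of $a_m^1 - a_m^{r_m+1}$ into genuinely new $\g_n$-dominant weights of $\g_n$-constituents (distinct isomorphism classes), not merely higher multiplicities. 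A secondary, more bookkeeping-type obstacle is to set up the $\varepsilon$-coordinate normalizations consistently across the chain — the $r_m$ change with $m$, and for type $A$ weights are only defined modulo $(1,\dots,1)$ — so I would fix, once and for all, the convention that aligns $\varepsilon_n^i$ with $\varepsilon_{n+1}^i$ for $i \leq r_n+1$ compatibly with the stated embeddings, and carry out the estimates in those coordinates.
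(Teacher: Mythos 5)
Your proposal follows exactly the paper's route: reduce via Lemma \ref{lemma_PS}(a) to the condition that $V_B(\CC_\mu)$ has finitely many isotypic $\g_n$-components for each $n$, and then verify this using the classical (interlacing) branching rules for $\SL(m)\subset\SL(m+1)$, $\SO(m-1)\subset\SO(m)$, $\Sp(2m-2)\subset\Sp(2m)$ — the paper simply cites Zhelobenko for this step, while you spell out the interlacing estimates. One small simplification available to you: Lemma \ref{lemma_PS}(a) only counts isotypic components (i.e., isomorphism classes of simple constituents), so multiplicities never need to be controlled and your worry on that point can be dropped.
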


\begin{proof}
According to Lemma \ref{lemma_PS} (a), the $\g$-module $V_B(\CC_{\mu})^*$ is integrable if and only if, when considered as a $\g_n$-module, it has finitely many isotypic components. Using the standard branching rules for embeddings of classical groups (see e.g. \cite{Z}) one can show that this latter condition is equivalent to the explicit conditions of (i) and (ii).
\end{proof}

Notice that the simple tensor $\sl(\infty)$-, $\so(\infty)$-, $\sp(\infty)$-modules discussed in \cite{PS} are a special case of the modules discussed in Proposition \ref{prop_FinitaryCaseIntegrable}.

More generally, if $G$ is a diagonal nonfinitary locally simple ind-group, it turns out that the condition of $\g$-integrability of the module $V_B(\CC_{\mu})^*$ is equivalent to the condition that $\mathrm{Ann}_{\mathrm{U}(\g)}(V_{B}(\CC_{\mu})) \neq 0$, see \cite[Proposition 4.5]{PP} and the references therein. Here we present an explicit sufficient condition for the integrability of $V_B(\CC_{\mu})^*$ for all diagonal locally simple ind-groups.

\begin{prop} \label{prop_Int}
Let $G$ be a diagonal locally simple ind-group and let $\mu = \varprojlim \mu_n$ be a dominant weight of $G$. Assume that there exists an integer $m_0$ with the property that for any $n$, the weight $\mu_n$ has an expression $\mu_n = \sum_i a_n^i \varepsilon_n^i$ such that $\sum_i |a_n^i| \leq m_0$. Then $V_B(\CC_{\mu})^*$ is integrable as a $\g$-module.
\end{prop}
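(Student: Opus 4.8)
The plan is to verify the criterion of Lemma~\ref{lemma_PS}(a): it suffices to show that for every $n$ the $\g$-module $V_B(\CC_{\mu})$, restricted to $\g_n$, has only finitely many isotypic $\g_n$-components. Since $V_B(\CC_{\mu}) = \varinjlim_{m \geq n} V_{B_m}(\CC_{\mu_m})$ with $\g_m$-equivariant (hence $\g_n$-equivariant) connecting maps, and each $V_{B_m}(\CC_{\mu_m})$ is a finite-dimensional, hence completely reducible, $\g_n$-module, the restriction $V_B(\CC_{\mu})\vert_{\g_n}$ is a direct sum of finite-dimensional simple $\g_n$-modules, and a given isomorphism class of simple $\g_n$-module occurs in it precisely when it occurs in $V_{B_m}(\CC_{\mu_m})\vert_{\g_n}$ for some $m \geq n$. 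Hence it is enough to produce a finite set $\mathcal{S}_n$ of isomorphism classes of simple $\g_n$-modules, depending on $n$ but not on $m$, such that every simple constituent of $V_{B_m}(\CC_{\mu_m})\vert_{\g_n}$ lies in $\mathcal{S}_n$.

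To build $\mathcal{S}_n$ I would combine the diagonal structure with the hypothesis. Let $W_m$ denote the natural module of the classical simple Lie algebra $\g_m$. Applied to the index $m$, the assumption $\sum_i |a_m^i| \leq m_0$ on some expression of the highest weight $\mu_m$ implies, by the standard description of tensor representations of classical Lie algebras together with Weyl's semisimplicity theorem, that $V_{B_m}(\CC_{\mu_m})$ is a direct summand of a module of the form $T_m := W_m^{\otimes p_m} \otimes (W_m^*)^{\otimes q_m}$ with $p_m + q_m \leq m_0$ (in the orthogonal and symplectic cases one may even take $q_m = 0$; note that spin-type weights are automatically excluded, since their $\ell^1$-norm grows with the rank). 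On the other hand, because each embedding $\g_k \subset \g_{k+1}$ is diagonal and this property passes to compositions, there is a $\g_n$-module isomorphism $W_m\vert_{\g_n} \cong W_n^{\oplus a} \oplus (W_n^*)^{\oplus b} \oplus \CC^{\oplus c}$ for suitable non-negative integers $a, b, c$. Substituting this into $T_m$ and expanding, $T_m\vert_{\g_n}$ is a direct sum of modules of the form $W_n^{\otimes s} \otimes (W_n^*)^{\otimes t}$ with $s + t \leq p_m + q_m \leq m_0$, the trivial tensor factors being simply omitted.

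The set of isomorphism classes of modules $W_n^{\otimes s} \otimes (W_n^*)^{\otimes t}$ with $s + t \leq m_0$ is finite, and each such module is finite-dimensional, hence has finitely many simple constituents; let $\mathcal{S}_n$ be the union, over all such $(s,t)$, of their sets of simple constituents. Then $\mathcal{S}_n$ is finite and independent of $m$, and every simple constituent of $T_m\vert_{\g_n}$ — in particular every simple constituent of the direct summand $V_{B_m}(\CC_{\mu_m})\vert_{\g_n}$ — lies in $\mathcal{S}_n$. By the reduction of the first paragraph, $V_B(\CC_{\mu})\vert_{\g_n}$ has finitely many isotypic $\g_n$-components for each $n$, so Lemma~\ref{lemma_PS}(a) applies and $V_B(\CC_{\mu})^*$ is integrable.

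The two places that need genuine care — and which I expect to be the crux of the write-up — are, first, the passage from the bound $\sum_i |a_m^i| \leq m_0$ to a realization of $V_{B_m}(\CC_{\mu_m})$ inside a mixed tensor power of uniformly bounded degree, which is where one must deal with the ambiguity of writing type-$A$ weights in terms of the $\varepsilon_m^i$ and with the separation of weights into positive and negative parts; and second, the transitivity statement that a composition of diagonal embeddings of classical simple Lie algebras again decomposes the natural module into copies of the natural module, its dual, and the trivial module. Both are elementary, but the second is precisely the structural input that makes the whole argument go through.
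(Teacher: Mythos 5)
Your argument is correct, and it reaches the conclusion by a genuinely different mechanism than the paper. Both proofs reduce to the criterion of Lemma~\ref{lemma_PS}(a), i.e.\ to showing that $V_B(\CC_\mu)$ has finitely many isotypic $\g_n$-components for each $n$, and both exploit the diagonal structure at exactly this point. The paper, however, gets the finiteness by invoking the explicit branching rules for diagonal embeddings of classical Lie algebras from [HTW] and [Hr]: these rules show that every $\g_n$-constituent of $V_{B_{n+k}}(\CC_{\mu_{n+k}})$ has a highest weight expressible with $\ell^1$-norm at most $\sum_i|a_{n+k}^i|\leq m_0$, and there are only finitely many such dominant weights. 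You instead bypass the branching rules entirely: you realize $V_{B_m}(\CC_{\mu_m})$ as a direct summand of a mixed tensor power $W_m^{\otimes p_m}\otimes(W_m^*)^{\otimes q_m}$ of degree at most $m_0$, observe that diagonality (which is stable under composition of embeddings) turns this restriction into a direct sum of mixed tensor powers of $W_n$ of degree at most $m_0$, and take for $\mathcal S_n$ the constituents of these finitely many finite-dimensional modules. This is softer and more self-contained --- it needs only the definition of a diagonal embedding and the classical fact that bounded-$\ell^1$-norm simple modules live in bounded tensor powers, and it effectively reproves the qualitative content of the cited branching rules. The price is the two care points you flag yourself: the non-uniqueness of the $\varepsilon$-expression in type $A$ (one checks that a possibly non-integral expression of norm $\leq m_0$ can be replaced by an integral one of norm $\leq 2m_0$, which is all the argument needs) and the fact that the chosen filtration may be diagonal only from some index on (harmless, since finitely many $\g_n$-components for large $n$ implies the same for all $n$, and the finitely many small $m$ contribute only finitely many constituents anyway). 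The paper silently glosses over the same two points.
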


\begin{proof}  
Since $G$ is diagonal, for large enough $n$ and for all $k$ the embeddings $\g_n \subset \g_{n+k}$ are diagonal. Therefore, we can decompose $V_{B_{n+k}}(\CC_{\mu_{n+k}})$ over $\g_n$ using branching rules for diagonal embeddings of classical Lie algebras, see \cite{HTW}, \cite{Hr}. 
These branching rules show that for the highest weight $\nu$ of any simple $\g_n$-module $U$ which enters the decomposition of $V_{B_{n+k}}(\CC_{\mu_{n+k}})$ over $\g_n$, there exists an expression $\nu = \sum_i b_i \varepsilon_n^i$ which satisfies $\sum_i|b_i| \leq \sum_i|a_{n+k}^i| \leq m_0$. This holds for any $k$. Thus there are only finitely many isotypic $\g_n$-components in the decomposition of $V_{B_{n+k}}(\CC_{\mu_{n+k}})$ and their number stabilizes for large $k$. By Lemma \ref{lemma_PS} (a), $V_B(\CC_{\mu})^*$ is an integrable $\g$-module.
\end{proof}

\section{Decomposition of cohomology}

In this section we fix $G = \varinjlim G_n$ to be a locally semisimple ind-group and $P  = \varinjlim P_n$ a parabolic subgroup. By $\g$ we denote the Lie algebra of $G$.

\begin{lemma} \label{lemma_DualModule} 
Let $E$ be a finite-dimensional 
$P$-module.
Then $H^{q}(G/P, \OO_{G/P}(E^*))$ is the dual module of an integrable $\g$-module of finite length.
\end{lemma}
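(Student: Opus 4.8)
The plan is to realize $H^{q}(G/P,\OO_{G/P}(E^*))$ as the full linear dual of an explicitly constructed inductive limit of finite-dimensional $\g_n$-modules, and then to bound the lengths of these modules uniformly in $n$ so that Lemma~\ref{lemma_finLen} applies.

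First, recall from Section~\ref{sec_Prelim} that $H^{q}(G/P,\OO_{G/P}(E^*))\cong\varprojlim_n W_n$, where $W_n:=H^{q}(G_n/P_n,\OO_{G_n/P_n}(E_n^*))$ and $E_n=E_{\vert P_n}$. Since $G_n/P_n$ is a projective variety and $\OO_{G_n/P_n}(E_n^*)$ is locally free of finite rank, each $W_n$ is, in the standard way, a finite-dimensional $G_n$-module, hence a finite-dimensional $\g_n$-module; moreover the transition maps $W_{n+1}\to W_n$, induced by restriction of cohomology along the $G_n$-equivariant closed embedding $G_n/P_n\hookrightarrow G_{n+1}/P_{n+1}$, are $\g_n$-equivariant. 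Set $M_n:=W_n^*$ and let $M:=\varinjlim_n M_n$, the limit taken along the transpose maps $M_n\to M_{n+1}$, which are again $\g_n$-equivariant. Then $M$ is a $\g$-module, and it is integrable: any $m\in M$ lies in the image of some $M_n$, and for $x\in\g$ one has $x\in\g_k$ for some $k\ge n$, so the span of $m,x\cdot m,x^2\cdot m,\dots$ is contained in the image of the finite-dimensional $\g_k$-module $M_k$. Finally, since dualization converts an inductive limit into the corresponding projective limit and each $W_n$ is finite-dimensional, $M^*\cong\varprojlim_n W_n^{**}\cong\varprojlim_n W_n\cong H^{q}(G/P,\OO_{G/P}(E^*))$ as $\g$-modules.

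It remains to show that $M$ has finite length. Passing to the images $\overline M_n\subseteq M$ of the $M_n$ (each a quotient of $M_n$, hence of length at most $\ell(M_n)=\ell(W_n)$), we have $M=\varinjlim_n\overline M_n$ with injective transition maps, so by Lemma~\ref{lemma_finLen} it suffices to bound $\ell(W_n)$ by a constant independent of $n$. Fix a composition series of the $P_n$-module $E_n^*$; its length is at most $\dim E_n^*=\dim E$, and its simple subquotients are certain simple $P_n$-modules $R_1,\dots,R_{\ell_n}$ with $\ell_n\le\dim E$. Because the functor $V\mapsto\OO_{G_n/P_n}(V)$ on $P_n$-modules is exact, each short exact sequence in this composition series gives a long exact cohomology sequence, whence $\ell(H^{q}(G_n/P_n,\OO_{G_n/P_n}(B)))\le\ell(H^{q}(G_n/P_n,\OO_{G_n/P_n}(A)))+\ell(H^{q}(G_n/P_n,\OO_{G_n/P_n}(C)))$ for every short exact sequence $0\to A\to B\to C\to 0$ of $P_n$-modules and every $q$. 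Iterating over the composition series, $\sum_{q}\ell(H^{q}(G_n/P_n,\OO_{G_n/P_n}(E_n^*)))\le\sum_{j=1}^{\ell_n}\sum_{q}\ell(H^{q}(G_n/P_n,\OO_{G_n/P_n}(R_j)))$. By the Bott--Borel--Weil theorem in the finite-dimensional setting, for each simple $P_n$-module $R_j$ the bundle $\OO_{G_n/P_n}(R_j)$ has at most one nonzero cohomology group and that group is simple, so $\sum_{q}\ell(H^{q}(G_n/P_n,\OO_{G_n/P_n}(R_j)))\le 1$. Hence $\ell(W_n)\le\sum_{q}\ell(H^{q}(G_n/P_n,\OO_{G_n/P_n}(E_n^*)))\le\ell_n\le\dim E$, uniformly in $n$.

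Putting the pieces together, $M$ is an integrable $\g$-module of length at most $\dim E$ with $M^*\cong H^{q}(G/P,\OO_{G/P}(E^*))$, which is the assertion. The only delicate point is the uniform length bound of the third paragraph: one has to check that the $P_n$-composition series of $E_n^*$ stays of bounded length (immediate, since $\dim E_n^*=\dim E$) and that subadditivity of cohomological length along short exact sequences, combined with Bott--Borel--Weil, yields a bound independent of $n$; granting this, Lemma~\ref{lemma_finLen} completes the proof.
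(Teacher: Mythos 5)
Your proof is correct and follows essentially the same route as the paper: write $H^{q}(G/P,\OO_{G/P}(E^*))$ as $\varprojlim W_n$, dualize to get an integrable inductive limit $M=\varinjlim W_n^*$, and invoke Lemma~\ref{lemma_finLen} after bounding the lengths uniformly. The only difference is that you spell out the uniform length bound (via composition series, exactness of the induction functor, and Bott--Borel--Weil), obtaining the bound $\dim E$ where the paper simply asserts the bound by the length of $E$; this added detail is welcome and harmless.
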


\begin{proof}
By Theorem 10.3 in \cite{DPW} we have
$$
H^{q}(G/P, \OO_{G/P}(E^*))  = \varprojlim H^{q}(G_n/P_n, \OO_{G_n/P_n}(E_n^*)),
$$
where $E_n:= E_{\vert P_n}$. The classical Bott-Borel-Weil theorem implies that $H^{q}(G_n/P_n, \OO_{G/P}(E_n^*)) \cong M_n^*$, where $M_n$ is some finite-dimensional 
$G_n$-module. In other words, 
$$
H^{q}(G/P, \OO_{G/P}(E^*)) = \varprojlim M_n^*,
$$
hence $H^{q}(G/P, \OO_{G/P}(E^*))$ is the dual module to the module $M = \varinjlim M_n$. Obviously, $M \in \Int_{\g}$. Since the length of each $M_n$ is bounded by the length of $E$, by Lemma \ref{lemma_finLen} the same holds for the length of $M$.
\end{proof}

\begin{lemma} \label{lemma_Inj} For any finite-dimensional $P$-module $E$, the integrability of the $\g$-module $H^q(G/P, \OO_{G/P}(E^*))$ implies the injectivity of this module in the category $\Int_{\g}$.
\end{lemma}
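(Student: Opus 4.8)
The plan is simply to combine Lemma~\ref{lemma_DualModule} with part (b) of Lemma~\ref{lemma_PS}. Indeed, Lemma~\ref{lemma_DualModule} tells us that $H^{q}(G/P, \OO_{G/P}(E^*))$ is canonically the dual module $M^*$ of an integrable $\g$-module $M = \varinjlim M_n$ of finite length (where $M_n$ is the finite-dimensional $G_n$-module with $H^{q}(G_n/P_n, \OO_{G_n/P_n}(E_n^*)) \cong M_n^*$ supplied by the classical Bott--Borel--Weil theorem). The hypothesis of the present lemma is precisely that this $\g$-module $M^*$ is integrable. Lemma~\ref{lemma_PS}(b) asserts that whenever $M^*$ is integrable it is an injective object of $\Int_{\g}$. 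Applying this with $M^* = H^{q}(G/P, \OO_{G/P}(E^*))$ gives the conclusion at once.

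There is essentially no obstacle to overcome: the statement is an immediate corollary of the two earlier results, and the proof is a one-line deduction. The only conceptual point worth stressing is that what makes Lemma~\ref{lemma_PS}(b) applicable is the identification of the cohomology group with a genuine dual module $M^*$ — not merely with some integrable $\g$-module — and this identification is exactly the content of Lemma~\ref{lemma_DualModule}, which itself relies on the Mittag--Leffler description $H^{q}(G/P, \OO_{G/P}(E^*)) = \varprojlim H^{q}(G_n/P_n, \OO_{G_n/P_n}(E_n^*)) = \varprojlim M_n^*$ together with the classical Bott--Borel--Weil theorem. Given these, one writes: by Lemma~\ref{lemma_DualModule}, $H^{q}(G/P, \OO_{G/P}(E^*)) \cong M^*$ with $M \in \Int_{\g}$ of finite length; by assumption $M^*$ is integrable; hence by Lemma~\ref{lemma_PS}(b), $M^*$ is injective in $\Int_{\g}$.
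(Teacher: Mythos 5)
Your proposal is correct and follows exactly the paper's argument: the paper proves this lemma in one line by combining Lemma~\ref{lemma_DualModule} with Lemma~\ref{lemma_PS}(b), precisely as you do. Your additional remark that the key point is realizing the cohomology as a genuine dual module $M^*$ is a faithful elaboration of that same deduction.
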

\begin{proof}
The statement follows directly from Lemma \ref{lemma_DualModule} and Lemma \ref{lemma_PS} (b).
\end{proof}

Let $E$ be a finite-dimensional irreducible $P$-module. 
Then Theorem 11.1 (ii) from \cite{DPW} states that $H^q(G/P, \OO_{G/P}(E^*)) \neq 0$ for at most one integer $q \geq 0$. Moreover, if $q$ is such an integer then $H^q(G/P, \OO_{G/P}(E^*)) = V^*$ for some irreducible 
$G$-module $V$. We call the module $E$ \textit{strongly finite} if $V^*$ is an integrable $\g$-module.

Furthermore, we call an arbitrary finite-dimensional $P$-module $E$ \textit{strongly finite} if all its simple constituents are strongly finite.  

We now describe several classes of strongly finite modules. 
Let $G = \varinjlim G_n$ be a diagonal locally simple ind-group, let $H = \varinjlim H_n$ be a Cartan subgroup and $B = \varinjlim B_n$ be a Borel subgroup containing $H$. 
 
\begin{prop} \label{prop_StrFiniteDiagCase}
Let $E$ be a finite-dimensional $B$-module satisfying the following condition: if $\lambda = \varprojlim \lambda_n$ is a weight of a simple constituent of $E$, then there exists an integer $m_0$, such that for any $n$ the weight $\lambda_n$ has an expression $\lambda_n = \sum_i a_n^i \varepsilon_n^i$ with $\sum_i |a_n^i| \leq m_0$. Then $E$ is a strongly finite $B$-module.
\end{prop}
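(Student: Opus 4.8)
The plan is to reduce the statement about strongly finite $B$-modules to the integrability criterion for duals of simple highest weight modules, namely Proposition \ref{prop_Int}. By definition, $E$ is strongly finite if each of its simple constituents is strongly finite, so it suffices to treat the case where $E$ itself is a simple (hence one-dimensional, since $B$ is solvable) $B$-module $\CC_\lambda$ with $\lambda = \varprojlim \lambda_n$ satisfying the hypothesis $\sum_i |a_n^i| \leq m_0$ for a suitable expression of $\lambda_n$ and a single bound $m_0$.

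First I would recall from the Bott-Borel-Weil theorem for ind-groups (Theorem 11.1 (ii) of \cite{DPW}) that if $H^q(G/B, \OO_{G/B}(\CC_\lambda^*)) \neq 0$ for some $q$, then this cohomology group equals $V^*$ where $V = V_B(\CC_\mu)$ is the simple $G$-module of $B$-highest weight $\mu$, and $\mu$ is obtained from $\lambda$ by the usual affine action of an appropriate Weyl group element at each finite level (more precisely, $\mu_n = w_n \cdot \lambda_n$ for the Weyl group element $w_n$ making $w_n \cdot \lambda_n$ dominant, when such an element exists, the length $q$ being the common length $\ell(w_n)$; if no such $w_n$ exists at some level the cohomology vanishes). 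Since the affine Weyl action is $\mu_n = w_n(\lambda_n + \rho_n) - \rho_n$, and permuting coordinates via $w_n$ together with the shift by $\rho_n$ changes the coordinates in a controlled way, the key point is that the bound $\sum_i |a_n^i| \le m_0$ on $\lambda_n$ translates into a bound of the same shape on $\mu_n$: indeed $w_n$ only permutes (and in types $B$, $C$, $D$ possibly sign-changes) the coordinates, so the $\ell^1$-norm is preserved by $w_n$ acting linearly, and the $\rho_n$-shift contributes boundedly after cancellation because $\mu_n + \rho_n = w_n(\lambda_n + \rho_n)$ forces the ``top'' coordinate of $\mu_n$ (the one relevant to Proposition \ref{prop_FinitaryCaseIntegrable} / the integrability condition) to stay bounded. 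I would make this precise by checking that $a_n^1$ (in types $B$, $C$, $D$) or $a_n^1 - a_n^{r_n+1}$ (in type $A$) for $\mu_n$ is bounded in terms of $m_0$ alone, using that these quantities for $\lambda_n$ are bounded by $m_0$ and that the $\rho_n$-correction telescopes.

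Having produced a uniform bound of the form $\sum_i |b_n^i| \leq m_1$ (with $m_1$ depending only on $m_0$) for a suitable expression $\mu_n = \sum_i b_n^i \varepsilon_n^i$ of the dominant weight $\mu_n$, I would then invoke Proposition \ref{prop_Int} directly: it asserts precisely that under such a uniform $\ell^1$-bound on the dominant weights $\mu_n$, the module $V_B(\CC_\mu)^*$ is integrable as a $\g$-module. This gives that the unique nonzero cohomology $H^q(G/B, \OO_{G/B}(\CC_\lambda^*)) = V_B(\CC_\mu)^*$ is integrable, which is exactly the assertion that the simple $B$-module $\CC_\lambda$ is strongly finite. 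Assembling over all simple constituents of the original $E$ finishes the proof.

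The main obstacle I anticipate is the bookkeeping in the second step: tracking how the affine Weyl group action and the $\rho_n$-shift interact with the coordinate expressions $\lambda_n = \sum_i a_n^i \varepsilon_n^i$ across the non-canonical choices of these expressions (the $\varepsilon_n^i$ are only defined up to the ambiguities inherent in type $A$, where weights are defined modulo $\varepsilon_n^1 + \cdots + \varepsilon_n^{r_n+1}$), and doing so uniformly in $n$. One must be careful that the bound $m_0$ really is inherited with a controlled constant, rather than growing with $\mathrm{rk}\,\g_n$; the safeguard is that $w_n$ has order bounded only by the Weyl group but acts as a signed permutation on coordinates, so it preserves (or at worst leaves invariant up to sign) the $\ell^1$-norm, and the $\rho_n$-shift only affects the quantities $a_n^1$ resp.\ $a_n^1 - a_n^{r_n+1}$ that enter Proposition \ref{prop_FinitaryCaseIntegrable} by a bounded amount. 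Once this is verified, the rest is a direct citation of the results already established in Section \ref{sec_Alg}.
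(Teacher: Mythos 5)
Your overall route is the same as the paper's: reduce to a one-dimensional constituent $\CC_\lambda$, identify the unique nonzero cohomology group as $V_B(\CC_{w\cdot\lambda})^*$ with $\mu_n = w(n)(\lambda_n+\rho_n)-\rho_n$, transfer the $\ell^1$-bound from $\lambda_n$ to $\mu_n$, and finish by Proposition~\ref{prop_Int}. However, the decisive step --- a uniform bound on the correction term $w(n)\rho_n-\rho_n$ --- is exactly the point you leave unjustified, and the heuristic you offer for it does not do the job. Writing $\mu_n = w(n)\lambda_n + (w(n)\rho_n-\rho_n)$, the first summand indeed has $\ell^1$-norm at most $m_0$ because $w(n)$ acts as a signed permutation; but for an arbitrary sequence of Weyl group elements the second summand is \emph{not} uniformly bounded (take $w(n)$ the longest element: then $w(n)\rho_n-\rho_n=-2\rho_n$). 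The paper closes this gap by invoking the specific structure of the ind-Weyl group $W_B$ from \cite{DP}: for a fixed $w\in W_B$ there is an $N$ such that every $w(n)\rho_n-\rho_n$ is supported on at most $N$ of the $\varepsilon_n^i$, and $\{w(n)\rho_n-\rho_n\}$ is a projective system of weights, which together yield a uniform bound $\sum_i|b_n^i|\le m$. Some such input (or a direct combinatorial argument showing that, because $\lambda_n+\rho_n$ differs from $\rho_n$ in at most $m_0$ coordinates and by at most $m_0$ in each, the sorting element $w(n)$ displaces only boundedly many indices by a bounded amount) is indispensable; ``the $\rho_n$-correction telescopes'' is not a proof.

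A second, related problem: the justification you sketch --- that $\mu_n+\rho_n = w(n)(\lambda_n+\rho_n)$ forces the \emph{top coordinate} of $\mu_n$ to stay bounded --- is aimed at the wrong criterion. Bounding $a_n^1$ (or $a_n^1-a_n^{r_n+1}$) is what Proposition~\ref{prop_FinitaryCaseIntegrable} requires in the finitary case, but the present proposition is about arbitrary \emph{diagonal} locally simple $G$, where the only available integrability criterion is Proposition~\ref{prop_Int}, which demands the full $\ell^1$-bound $\sum_i|b_n^i|\le m_1$. A bound on the top coordinate (or on the spread of coordinates) does not imply an $\ell^1$-bound: the dominant weights $\varepsilon_n^1+\dots+\varepsilon_n^{k_n}$ with $k_n\to\infty$ have top coordinate $1$ but unbounded $\ell^1$-norm. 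So even after fixing the first gap you must make sure the bound you produce is of the $\ell^1$ type, which is what the paper's decomposition $\sum_i|k_n^i+b_n^i|\le m_0+m$ delivers.
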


\begin{proof}
Let $L = \CC_{\lambda}$ denote the one-dimensional $B$-module of weight $\lambda = \varprojlim \lambda_n$, such that $\lambda$ satisfies the condition of the proposition. In \cite{DP} an analogue $W_B$ of the Weyl group for diagonal locally simple ind-groups is defined and it is proved that (see Theorem 4.27 in \cite{DP})
$$
H^q(G/B, \OO_{G/B}(L^*)) = V_B(\CC_{w \cdot \lambda})^*,
$$
where $w \in W_B$ and $w\cdot \lambda$ is a dominant weight defined as the (projective) limit of a projective system of $G_n$-weights $\{w(n)(\lambda_n + \rho_n) - \rho_n\}$. Here $w(n)$ is an element of the Weyl group of $G_n$, determined uniquely by $w$, and $\rho_n$ denotes as usual the half-sum of the positive roots of $\g_n$. The definitions of $w$ and of $w(n)$ can be found in \cite{DP}. It is important for our considerations that given $w$, there exists an integer $N$ such that 
for each $n$, the weight $w(n)\rho_n - \rho_n$ is a linear combination of at most $N$ of the $\varepsilon_n^i$'s. Furthermore, it is proved in \cite{DP} that $\{w(n)\rho_n - \rho_n\}$ is a projective system of weights of $G_n$. It follows that, there exists an integer $m$, such that for all $n$ the weight $w(n)\rho_n - \rho_n$ has an expression $w(n)\rho_n - \rho_n = \sum_i b_n^i \varepsilon_n^i$ with $\sum_i |b_n^i| \leq m$. This shows that if
$w(n)(\lambda_n + \rho_n) - \rho_n = \sum_i (k_n^i + b_n^i) \varepsilon_n^i
$
then 
$\sum_i |k_n^i + b_n^i| \leq m_0 + m.
$
Therefore, by Proposition \ref{prop_Int}, $V_B(\CC_{w \cdot \lambda})^*$ is an integrable $\g$-module. 
\end{proof}

When $G$ is a finitary simple ind-group the following stronger result holds.
\begin{prop}
Let $G$ be one of the finitary simple ind-groups and let $E$ be a finite-dimensional $B$-module. Then $E$ is strongly finite if and only if the following holds: 
\begin{itemize}
\item If $G \cong \SL(\infty)$ and $\lambda = \varprojlim \lambda_n$ is the weight of a simple constituent of $E$, then there exists an integer $m_0$ such that for any $n$, if $\lambda_n = \sum_i a_n^i \varepsilon_n^i$ then $a_n^{max} - a_n^{min} \leq m_0$. Here $a_n^{max} = \mathrm{max}_i\{a_n^i\}$ and $a_n^{min} = \mathrm{min}_i\{a_n^i\}$.
\item If $G \cong \SO(\infty)$ or $\Sp(\infty)$ and $\lambda = \varprojlim \lambda_n$ is the weight of a simple constituent of $E$, then there exists an integer $m_0$ such that for any $n$, if $\lambda_n = \sum_i a_n^i \varepsilon_n^i$ then $\mathrm{max}_i\{|a_n^i|\} \leq m_0$.
\end{itemize}
\end{prop}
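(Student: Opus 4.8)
The plan is to reduce to the case of a one-dimensional $B$-module and then pass the inducing weight through the Bott--Borel--Weil theorem for (diagonal, hence in particular finitary) locally simple ind-groups together with the integrability criterion of Proposition \ref{prop_FinitaryCaseIntegrable}. First I would observe that the simple constituents of a finite-dimensional $B$-module $E$ are exactly the one-dimensional modules $\CC_\lambda$, where $\lambda = \varprojlim\lambda_n$ ranges over the weights of $E$, and that, by definition, $E$ is strongly finite if and only if every such $\CC_\lambda$ is. So it suffices to characterize strong finiteness of a single $\CC_\lambda$. By the Bott--Borel--Weil theorem (as recalled in the proof of Proposition \ref{prop_StrFiniteDiagCase}), either $H^q(G/B, \OO_{G/B}(\CC_\lambda^*)) = 0$ for all $q$, or there is a unique $w \in W_B$ for which $w\cdot\lambda$ is dominant and $H^q(G/B, \OO_{G/B}(\CC_\lambda^*)) = V_B(\CC_{w\cdot\lambda})^*$ in the one degree $q$ where the cohomology does not vanish. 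In the second case, $\CC_\lambda$ is strongly finite precisely when the $\g$-module $V_B(\CC_{w\cdot\lambda})^*$ is integrable.

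Next I would apply Proposition \ref{prop_FinitaryCaseIntegrable} to the dominant weight $\mu = w\cdot\lambda = \varprojlim(w\cdot\lambda)_n$: writing $(w\cdot\lambda)_n = \sum_i c_n^i\varepsilon_n^i$, integrability of $V_B(\CC_{w\cdot\lambda})^*$ amounts to a bound, uniform in $n$, on $c_n^1 - c_n^{r_n+1}$ when $G \cong \SL(\infty)$ and on $c_n^1$ when $G \cong \SO(\infty)$ or $\Sp(\infty)$. The main work is to match this with the asserted bound on $\lambda_n = \sum_i a_n^i\varepsilon_n^i$. Using $(w\cdot\lambda)_n = w(n)(\lambda_n + \rho_n) - \rho_n = w(n)\lambda_n + \bigl(w(n)\rho_n - \rho_n\bigr)$ I would argue: (a) since $w$ is finitary, $w(n)$ permutes the standard basis functions $\varepsilon_n^i$ (up to signs in the orthogonal and symplectic cases) with support bounded independently of $n$, so the multiset of coordinates of $w(n)\lambda_n$ coincides with that of $\lambda_n$ up to signs, and in particular $\max_i$ and $\min_i$, respectively $\max_i|\cdot|$, are unchanged; and (b) by the fact recalled in the proof of Proposition \ref{prop_StrFiniteDiagCase}, $\{w(n)\rho_n - \rho_n\}$ is a projective system of $G_n$-weights involving at most $N$ of the $\varepsilon_n^i$'s with coefficients bounded by some $m$, hence is bounded uniformly in $n$. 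Combining (a) and (b), the coordinates of $(w\cdot\lambda)_n$ differ from those of $\lambda_n$ by a uniformly bounded amount, and since a dominant weight of $\SL(r_n+1)$ has $c_n^1 = \max_i c_n^i$, $c_n^{r_n+1} = \min_i c_n^i$, while a dominant weight of $\SO$ or $\Sp$ has $c_n^1 = \max_i|c_n^i|$, the integrability bound for $V_B(\CC_{w\cdot\lambda})^*$ holds if and only if the bound stated for $\lambda$ does. This gives both implications whenever the cohomology is nonzero.

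I expect two obstacles. The combinatorial one is point (b) in types $B$, $C$, $D$: a sign change of a fixed coordinate of $\rho_n$ would a priori contribute an entry growing linearly in the rank, so the comparison genuinely relies on the cited fact from \cite{DP} that the sequence $\{w(n)\rho_n - \rho_n\}$ stabilizes for the $w$'s produced by Bott--Borel--Weil. The more delicate one is the case of a weight $\lambda$ for which $H^q(G/B, \OO_{G/B}(\CC_\lambda^*))$ vanishes in every degree, so that $\CC_\lambda$ is strongly finite regardless of $\lambda$: for the stated equivalence to be exact one needs that such $\lambda$ automatically satisfies the bound, equivalently that a weight violating the bound always yields a nonzero flag cohomology. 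Pinning this down --- using the precise description of nonvanishing of $H^\bullet(G/B, \OO_{G/B}(\CC_\lambda^*))$ for finitary ind-groups in \cite{DPW}, \cite{DP} --- is the point that I would expect to require the most care.
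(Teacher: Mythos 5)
Your proposal follows essentially the same route as the paper, which in fact offers no written argument beyond the remark that the proof ``follows the same ideas as in Proposition \ref{prop_StrFiniteDiagCase}, and uses also Proposition \ref{prop_FinitaryCaseIntegrable}'' --- i.e., reduce to $\CC_\lambda$, pass $\lambda$ through the Bott--Borel--Weil theorem of \cite{DP} to get $V_B(\CC_{w\cdot\lambda})^*$, compare $w\cdot\lambda$ with $\lambda$ via the uniform bound on $w(n)\rho_n-\rho_n$, and invoke the integrability criterion of Proposition \ref{prop_FinitaryCaseIntegrable}. The two obstacles you flag --- especially the ``only if'' direction for weights $\lambda$ whose flag cohomology vanishes in every degree and which are therefore vacuously strongly finite --- are genuine subtleties that the paper's one-line proof likewise leaves unaddressed, so your write-up is, if anything, more complete than the source.
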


The proof follows the same ideas as in Proposition \ref{prop_StrFiniteDiagCase}, and uses also Proposition \ref{prop_FinitaryCaseIntegrable}. 

We now return to the general setting of this section and state the main result of this note.

\begin{theorem} \label{thm_DirectSum} 
Let $E$ be a strongly finite $P$-module. Then 
\begin{itemize}
\item [(i)] $H^q(G/P, \OO_{G/P}(E^*))$ is an integrable $\g$-module for any $q$, and $H^q(G/P, \OO_{G/P}(E^*)) \neq 0$ for at most $N$ values of $q$, where $N$ denotes the length of $E$.
\item [(ii)] $H^q(G/P, \OO_{G/P}(E^*)) \neq 0$ implies that as a $\g$-module $H^q(G/P, \OO_{G/P}(E^*))$ is isomorphic to a direct sum of cohomologies of homogeneous bundles of the form $\OO_{G/P}(R)$ for simple constituents $R$ of $E^*$.
\end{itemize}
\end{theorem}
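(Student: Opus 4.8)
The plan is to establish parts (i) and (ii) in turn, each by induction on the length $N$ of $E$, processing short exact sequences of $P$-modules through the long exact sequence in cohomology; the injectivity statement of Lemma~\ref{lemma_Inj} will play the role that Weyl's semisimplicity theorem plays classically. The base case $N=1$ is immediate: if $E$ is simple so is $E^*$, so by Theorem~11.1(ii) of \cite{DPW} the group $H^q(G/P,\OO_{G/P}(E^*))$ is nonzero for at most one $q$ and equals $V^*$ for an irreducible $G$-module $V$, and $V^*$ is integrable by the definition of strongly finite. For $N>1$, pick a simple $P$-submodule $E'\subset E$, set $E''=E/E'$ (of length $N-1$), and dualize to $0\to(E'')^*\to E^*\to(E')^*\to 0$. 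Since $\OO_{G/P}(-)$ is exact and the transition maps of the attendant projective systems of sheaves are surjective (so Mittag--Leffler applies, as in \cite{DPW}), this yields a short exact sequence of sheaves on $G/P$; moreover $E'$ and $E''$ are again strongly finite, their constituents being among those of $E$.

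Part (i) now follows from the long exact sequence. By the inductive hypothesis every $H^q(\OO_{G/P}((E')^*))$ and $H^q(\OO_{G/P}((E'')^*))$ is integrable; $H^q(\OO_{G/P}(E^*))$ sits in an extension of a submodule of the first by a quotient of the second, so it is integrable because $\Int_\g$ is closed under submodules, quotients and extensions (\cite{PS}). If $H^q(\OO_{G/P}(E^*))\neq 0$, then one of those two outer groups is nonzero, which by the inductive hypothesis places $q$ in the set of at most $N-1$ degrees where $\OO_{G/P}((E'')^*)$ has cohomology, or in the single degree where $\OO_{G/P}((E')^*)$ does; hence there are at most $N$ such $q$.

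For part (ii), rename the dual sequence as $0\to K\to E^*\to R_1\to 0$ with $R_1=(E')^*$ simple and $K=(E'')^*$ strongly finite of length $N-1$. If $\OO_{G/P}(R_1)$ is acyclic the long exact sequence gives $H^q(\OO_{G/P}(E^*))\cong H^q(\OO_{G/P}(K))$ for every $q$ and we are done by the inductive hypothesis; otherwise, by the base case $H^\bullet(\OO_{G/P}(R_1))$ is concentrated in a single degree $d$, where it equals $V_1^*$ for an irreducible $G$-module $V_1$, with $V_1^*$ integrable and hence, by Lemma~\ref{lemma_Inj}, injective in $\Int_\g$. The only possibly nonzero connecting map is $\delta_d\colon H^d(\OO_{G/P}(R_1))\to H^{d+1}(\OO_{G/P}(K))$, so $H^q(\OO_{G/P}(E^*))\cong H^q(\OO_{G/P}(K))$ for $q\neq d,d+1$, and these degrees are handled by the inductive hypothesis. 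In degree $d$ we get $0\to H^d(\OO_{G/P}(K))\to H^d(\OO_{G/P}(E^*))\to\ker\delta_d\to 0$; by part (i), $H^d(\OO_{G/P}(K))$ is integrable, hence injective, so this sequence splits and $H^d(\OO_{G/P}(E^*))\cong H^d(\OO_{G/P}(K))\oplus\ker\delta_d$. Thus $\ker\delta_d$ is a direct summand of the injective module $H^d(\OO_{G/P}(E^*))$, so $\ker\delta_d$ is injective; being an injective submodule of the indecomposable module $V_1^*$, it equals $0$ or $V_1^*$. If $\ker\delta_d=V_1^*$ then $\delta_d=0$, and the long exact sequence gives $H^d(\OO_{G/P}(E^*))\cong H^d(\OO_{G/P}(K))\oplus H^d(\OO_{G/P}(R_1))$ and $H^{d+1}(\OO_{G/P}(E^*))\cong H^{d+1}(\OO_{G/P}(K))$. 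If $\ker\delta_d=0$ then $\delta_d$ embeds the injective module $V_1^*$ into $H^{d+1}(\OO_{G/P}(K))$, which by the inductive hypothesis is a finite direct sum of indecomposable injective cohomology groups $H^\bullet(\OO_{G/P}(R))$ of simple constituents $R$ of $K$; so $\im\delta_d$ splits off and, by Krull--Schmidt, $H^{d+1}(\OO_{G/P}(E^*))\cong H^{d+1}(\OO_{G/P}(K))/\im\delta_d$ is again such a direct sum, with the constituent $R_1$ dropping out. In either case the inductive hypothesis applied to $K$ finishes the proof.

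The crux --- the place where this departs from the classical argument --- is the dichotomy that $\ker\delta_d$, being injective and contained in $V_1^*$, must be $0$ or $V_1^*$. Classically $H^d(\OO_{G/P}(R_1))$ is a \emph{simple} $G$-module and this is automatic; here $V_1^*$ is only the dual of a simple $G$-module, uncountable-dimensional and far from semisimple, so the dichotomy has to be extracted from the structure theory of $\Int_\g$ developed in \cite{PS}: that $\Int_\g$ is locally Artinian with enough injectives, that the dual of a simple integrable module (when itself integrable) is indecomposable, and that Krull--Schmidt holds for finite direct sums of such indecomposable injectives. Verifying that these ingredients are available in the present generality, and that the injectivity-based splittings indeed realize every summand as an honest cohomology group $H^\bullet(\OO_{G/P}(R))$ of a simple constituent $R$ of $E^*$, is the technical heart of the argument.
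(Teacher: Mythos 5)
Your overall skeleton --- induction on the length $N$, splitting off a simple submodule $E'\subset E$, dualizing, running the long exact sequence, and using injectivity of integrable duals (Lemma \ref{lemma_Inj}) in place of Weyl semisimplicity --- is the same as the paper's, and your treatment of part (i) is fine. The problem sits in part (ii), at exactly the point you yourself flag as ``the technical heart'': the dichotomy $\ker\delta_d=0$ or $\ker\delta_d=V_1^*$. You derive it from the assertion that $V_1^*$, the dual of a simple integrable $\g$-module, is \emph{indecomposable}, and in the case $\ker\delta_d=0$ you further invoke a Krull--Schmidt property for finite direct sums of such duals. Neither assertion is proven in your argument, and neither is available off the shelf in the generality of the theorem (arbitrary locally semisimple $G$, arbitrary parabolic $P$): the structure theory in \cite{PS} gives indecomposability and injective-hull statements for duals of simple \emph{tensor} modules over the three finitary Lie algebras, not for duals of arbitrary simple integrable modules over arbitrary locally semisimple $\g$. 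Indecomposability does not follow formally from simplicity of $V_1$ either --- $V_1^*$ is uncountable-dimensional and a priori could split. Deferring these points to a later ``verification'' leaves the central step unestablished; as written the proof has a genuine gap.

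The paper obtains the same dichotomy by a different and elementary route (Lemma \ref{lemma_connectHom}), which avoids any structure theory of $\Int_{\g}$ beyond Lemma \ref{lemma_PS}: the connecting homomorphism is the projective limit $g=\varprojlim g_n$ of the connecting homomorphisms at the finite levels, where by the classical Bott--Borel--Weil theorem each $H^q(G_n/P_n,\OO_{G_n/P_n}(E_n''^*))$ is an \emph{irreducible} $G_n$-module; hence each $g_n$ is either zero or injective, and the commuting diagram of the projective systems forces the same alternative for $g$. Substituting this finite-level argument for your indecomposability/Krull--Schmidt step repairs the proof. (A smaller point, which the paper's own Case 2 also passes over quickly: once $\im\delta_d\cong V_1^*$ splits off $H^{d+1}(G/P,\OO_{G/P}(K))$, identifying the complementary summand as again a direct sum of cohomologies of simple constituents requires a cancellation argument and deserves an explicit sentence.)
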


\begin{proof}
 We use induction on $N$. When $E$ is irreducible, Theorem 11.1 from \cite{DPW} tells us that $\OO_{G/P}(E^*)$ has at most one non-vanishing cohomology group. The integrability of the nonzero cohomology group follows from the assumption that $E$ is strongly finite. This proves (i) and (ii) for an irreducible $P$-module $E$. Next, assume that (i) and (ii) hold for stongly finite modules $E'$ of length $N-1$. Then we consider our module $E$ of length $N$ together with a short exact sequence
$$
0 \rightarrow E'' \rightarrow E \rightarrow E' \rightarrow 0
$$ 
such that $E'$ is of length $N-1$ and $E''$ is irreducible. If $H^q(G/P, \OO_{G/P}(E''^*))  = 0$ for all $q$, both (i) and (ii) follow trivially from the induction assumption.
Suppose that  $H^q(G/P, \OO_{G/P}(E''^*)) \neq 0$. Then the short exact sequence 
$$
0 \rightarrow \OO_{G/P}(E'^*) \rightarrow \OO_{G/P}(E^*) \rightarrow \OO_{G/P}(E''^*) \rightarrow 0
$$
yields an exact sequence
\begin{equation} \label{eq_es}
\begin{aligned} 
&0 \rightarrow H^{q}(G/P, \OO_{G/P}(E'^*))\rightarrow H^{q}(G/P, \OO_{G/P}(E^*)) \rightarrow \\
&H^{q}(G/P, \OO_{G/P}(E''^*)) \stackrel{g}{\rightarrow} H^{q+1}(G/P, \OO_{G/P}(E'^*)) \rightarrow H^{q+1}(G/P, \OO_{G/P}(E^*)) \rightarrow 0.
\end{aligned}
\end{equation}
Since $\Int_{\g}$ is an abelian category and $E$ is strongly finite, it follows that $H^{q}(G/P, \OO_{G/P}(E^*))$ and $H^{q+1}(G/P, \OO_{G/P}(E^*))$ are integrable $\g$-modules. 
Moreover, for $p \neq q, q+1$ we have $H^{p}(G/P, \OO_{G/P}(E^*)) \cong H^{p}(G/P, \OO_{G/P}(E'^*))$, and this proves (i). 

To prove (ii) we use Lemma \ref{lemma_connectHom} below and consider two cases for the connecting homomorphism $g$ in the exact sequence (\ref{eq_es}).

{\bf Case 1.} $\ker g = H^{q}(G/P, \OO_{G/P}(E''^*))$. Then the integrability of $H^{q}(G/P, \OO_{G/P}(E^*))$ proven in (i), and Lemmas \ref{lemma_DualModule}, \ref{lemma_Inj} imply that 
$$H^{q}(G/P, \OO_{G/P}(E^*))\cong H^{q}(G/P, \OO_{G/P}(E'^*)) \oplus H^{q}(G/P, \OO_{G/P}(E''^*)).
$$ Furthermore, $H^{q+1}(G/P, \OO_{G/P}(E'^*)) \cong H^{q+1}(G/P, \OO_{G/P}(E^*))$. Therefore, by induction (ii) holds for both $H^{q}(G/P, \OO_{G/P}(E^*))$ and $H^{q+1}(G/P, \OO_{G/P}(E^*))$. 

{\bf Case 2.} $\ker g = 0$. Then $\im g$ is a direct summand of $H^{q+1}(G/P, \OO_{G/P}(E'^*))$ by Lemma \ref{lemma_Inj}, i.e. 
$$
H^{q+1}(G/P, \OO_{G/P}(E'^*)) \cong H^{q}(G/P, \OO_{G/P}(E''^*)) \oplus H^{q+1}(G/P, \OO_{G/P}(E^*)).
$$
Furthermore, $H^{q}(G/P, \OO_{G/P}(E'^*))\cong H^{q}(G/P, \OO_{G/P}(E^*))$.

Since by induction both $H^{q}(G/P, \OO_{G/P}(E'^*))$ and $H^{q+1}(G/P, \OO_{G/P}(E'^*))$ are isomorphic to direct sums of cohomologies of bundles $\OO_{G/P}(R')$ for simple constituents $R'$ of $(E')^*$, and hence also of $E^*$, the same follows for $H^{q}(G/P, \OO_{G/P}(E^*))$ and $H^{q+1}(G/P, \OO_{G/P}(E^*))$.
\end{proof}

\begin{lemma} \label{lemma_connectHom} Consider the connecting homomorphism 
$$
g: H^q(G/P, \OO_{G/P}(E''^*)) \rightarrow H^{q+1}(G/P, \OO_{G/P}(E'^*)),
$$
where $E''$ is an irreducible $P$-module. Then $\ker g = H^q(G/P, \OO_{G/P}(E''^*))$ or $\ker g = 0$.
\end{lemma}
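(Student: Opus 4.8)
The plan is to descend to the finite-dimensional groups $G_n$, where the relevant cohomology is a \emph{simple} module, and then recombine using the projective-limit description of $g$. First I would record, via Theorem 11.1 of \cite{DPW}, that since $E''$ is irreducible, either $H^q(G/P,\OO_{G/P}(E''^*))=0$ --- in which case the assertion $\ker g=0$ is vacuous --- or $H^q(G/P,\OO_{G/P}(E''^*))=V^*$ for a simple $G$-module $V=\varinjlim V_n$. In the latter case, for $n$ large the restriction $E''_n:=E''|_{P_n}$ is an irreducible $P_n$-module (the images of the $P_n$ in $\GL(E'')$ stabilize, $E''$ being finite-dimensional), the classical Bott--Borel--Weil theorem gives $H^q(G_n/P_n,\OO_{G_n/P_n}((E''_n)^*))=V_n^*$ with $V_n$ a simple $G_n$-module, and the transition maps $V_n\to V_{n+1}$ are injective (highest weight vector to highest weight vector), so their duals $\phi_n\colon V_{n+1}^*\to V_n^*$ are surjective. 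Moreover, by Theorem 10.3 of \cite{DPW} the five-term exact sequence $(\ref{eq_es})$ is the projective limit of the analogous five-term sequences for the $G_n/P_n$; in particular $g=\varprojlim g_n$, where $g_n\colon V_n^*\to H^{q+1}(G_n/P_n,\OO_{G_n/P_n}((E'_n)^*))$ is the level-$n$ connecting homomorphism.

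Next I would carry out an elementary dichotomy at the finite level. Since $V_n^*$ is a simple $G_n$-module, each $g_n$ is either injective or zero. Naturality of the connecting homomorphism with respect to the closed embeddings $G_n/P_n\hookrightarrow G_{n+1}/P_{n+1}$ yields commuting squares $g_n\circ\phi_n=\psi_n\circ g_{n+1}$; because $\phi_n$ is surjective, $g_{n+1}=0$ forces $g_n=0$, so $g_n\neq 0$ implies $g_{n+1}\neq 0$. Hence there are only two possibilities: either $g_n=0$ for all large $n$, or $g_n$ is injective for all $n\geq n_0$ for some $n_0$. In the first case $g=\varprojlim g_n=0$, so $\ker g=H^q(G/P,\OO_{G/P}(E''^*))$. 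In the second case $\ker g_n=0$ for all $n\geq n_0$; since $\varprojlim$ is left exact, $\ker g=\ker(\varprojlim g_n)=\varprojlim\ker g_n$, and this vanishes because the cofinal subsystem $\{\,n\geq n_0\,\}$ contributes only zeros. These are exactly the two alternatives in the statement.

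The step I expect to demand the most care is the first reduction: one must be sure that $g$ really is $\varprojlim g_n$ and that the naturality squares above are genuinely available. I would base this on the Mittag--Leffler argument underpinning Theorem 10.3 of \cite{DPW}, noting that the projective systems in play consist of finite-dimensional spaces, so the Mittag--Leffler condition holds automatically and cohomology commutes with $\varprojlim$. It is also worth emphasizing \emph{why} no shorter argument is available: the module $V^*$ is uncountable-dimensional and typically has a wealth of proper $\g$-submodules, so one cannot appeal to any ``cosimplicity'' of $V^*$; the dichotomy for $\ker g$ comes entirely from the rigidity of the $G_n$-module maps $g_n$ out of the simple modules $V_n^*$ together with left exactness of $\varprojlim$.
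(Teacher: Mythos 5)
Your argument is correct and follows essentially the same route as the paper's proof: pass to the finite-dimensional levels via $g=\varprojlim g_n$, use that $H^q(G_n/P_n,\OO_{G_n/P_n}((E''_n)^*))$ is an irreducible $G_n$-module so each $g_n$ is zero or injective, and conclude from the commuting squares. Your write-up merely spells out the details the paper compresses into ``by the commutativity of the above diagram the statement follows'' (surjectivity of the transition maps in the top row, the resulting dichotomy on $\{n: g_n\neq 0\}$, and left exactness of $\varprojlim$), which is a welcome but not substantively different elaboration.
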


\begin{proof}
We have the following commuting diagram of homomorphisms
\begin{equation*}
\xymatrix{
&\dots \ar[r] & H^q(G_{n+1}/P_{n+1}, \OO_{G_{n+1}/P_{n+1}}(E_{n+1}''^*)) \ar[d]^{g_{n+1}}\ar[r]&H^q(G_{n}/P_{n}, \OO_{G_{n}/P_{n}}(E_{n}''^*)) \ar[d]^{g_n}\ar[r]&\dots&\\
&\dots \ar[r] & H^{q+1}(G_{n+1}/P_{n+1}, \OO_{G_{n+1}/P_{n+1}}(E_{n+1}'^*))\ar[r]&H^{q+1}(G_{n}/P_{n}, \OO_{G_{n}/P_{n}}(E_{n}'^*))\ar[r]&\dots&\\
}
\end{equation*}
where $H^q(G/P, \OO_{G/P}(E''^*)) = \varprojlim H^q(G_{n}/P_{n}, \OO_{G_{n}/P_{n}}(E_{n}''^*))$, $H^{q+1}(G/P, \OO_{G/P}(E'^*)) = \varprojlim H^{q+1}(G_{n}/P_{n}, \OO_{G_{n}/P_{n}}(E_{n}'^*))$, and $g = \varprojlim g_n$. 

For each $n$ it holds that $H^q(G_{n}/P_{n}, \OO_{G_{n}/P_{n}}(E_{n}''^*))$ is an irreducible $G_n$-module. Hence for $g_n$ we have $\ker g_n = H^q(G_n/P_n, \OO_{G_n/P_n}(E_n''^*))$ or $\ker g_n = 0$. By the commutativity of the above diagram the statement of the lemma follows.
\end{proof}

Theorem \ref{thm_DirectSum} together with Lemma \ref{lemma_Inj} imply
\begin{coro} If $E$ is strongly finite then all nonzero cohomology groups $H^q(G/P, \OO_{G/P}(E^*))$ are injective objects of $\Int_{\g}$. 
\end{coro}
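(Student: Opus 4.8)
The plan is to read this off directly from part~(i) of Theorem~\ref{thm_DirectSum} together with Lemma~\ref{lemma_Inj}. Since $E$ is strongly finite, Theorem~\ref{thm_DirectSum}(i) already tells us that $H^q(G/P, \OO_{G/P}(E^*))$ is an integrable $\g$-module for \emph{every} $q$; in particular this holds for every $q$ at which the cohomology does not vanish. Lemma~\ref{lemma_Inj} then applies with this very module $E$: for any finite-dimensional $P$-module, integrability of $H^q(G/P, \OO_{G/P}(E^*))$ forces this $\g$-module to be an injective object of $\Int_{\g}$. Stringing the two assertions together gives the corollary.

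Equivalently, one may bypass Lemma~\ref{lemma_Inj} and argue straight from Lemmas~\ref{lemma_DualModule} and~\ref{lemma_PS}(b): by Lemma~\ref{lemma_DualModule} each cohomology group in question is the dual $M^*$ of an integrable $\g$-module $M$ of finite length, and once Theorem~\ref{thm_DirectSum}(i) certifies that $M^*$ is itself integrable, Lemma~\ref{lemma_PS}(b) yields that $M^*$ is injective in $\Int_{\g}$. Part~(ii) of Theorem~\ref{thm_DirectSum} is not logically needed for the injectivity claim, but it refines it: the injective module $H^q(G/P, \OO_{G/P}(E^*))$ is exhibited explicitly as a finite direct sum of the injective modules $H^{p}(G/P, \OO_{G/P}(R))$ attached to simple constituents $R$ of $E^*$.

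There is no real obstacle here; all the substance has been packaged into Theorem~\ref{thm_DirectSum} and into the results of Section~\ref{sec_Alg}. The only point worth stating carefully is that injectivity is asserted for the nonzero cohomology groups (which is harmless, the zero module being trivially injective), and that in applying Lemma~\ref{lemma_Inj} one works with $E$ itself rather than passing to its simple constituents, so that no closure property of injectives under (possibly infinite) direct sums needs to be invoked.
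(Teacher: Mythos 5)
Your argument is correct and is exactly the one the paper intends: the corollary is deduced from Theorem \ref{thm_DirectSum}(i) (integrability of all cohomology groups for strongly finite $E$) combined with Lemma \ref{lemma_Inj}, whose proof is precisely your alternative route through Lemma \ref{lemma_DualModule} and Lemma \ref{lemma_PS}(b). Your observation that part (ii) of the theorem is not logically needed here is also accurate.
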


\vspace{0.4cm}
\small{
\author{\noindent Elitza Hristova, Institute of Mathematics and Informatics, Bulgarian Academy of Sciences, Acad. Georgi Bonchev Str., Block 8, 1113 Sofia, Bulgaria. \\ E-mail: e.hristova@math.bas.bg}\\

\author{\noindent Ivan Penkov, Jacobs University Bremen, School of Engineering and Science, Campus Ring 1, 28759 Bremen, Germany. \\ E-mail: i.penkov@jacobs-university.de}
}

\end{document}